\newtheorem{theorem}{Theorem}[section]
\newtheorem{corollary}[theorem]{Corollary}
\newtheorem{lemma}[theorem]{Lemma}
\newtheorem{remark}[theorem]{Remark}
\numberwithin{equation}{section}
\begin{document}

\title[Hardy inequalities with homogenuous weights]
{Hardy inequalities with homogenuous weights}

\author{Thomas Hoffmann-Ostenhof}
\address{Thomas Hoffmann-Ostenhof:
University of Vienna}
\email{thoffmann@tbi.univie.ac.at}

\author{Ari Laptev}
\address{Ari Laptev: Imperial College London }
\email{a.laptev@imperial.ac.uk}

\keywords{Hardy inequalities, Laplace-Beltrami operators}

\subjclass{Primary: 35P15; Secondary: 81Q10}

\begin{abstract}
In this paper we obtain some sharp Hardy inequalities with weight functions that may admit singularities on the unit sphere.
 In order to prove the main results of the paper we use some recent sharp inequalities for the lowest eigenvalue
 of Schr\"odinger operators on the unit sphere obtaind in the paper \cite{DEL}.
\end{abstract}

\maketitle

\section{Introduction}

\noindent
The classical Hardy inequality for the Laplacian in $\mathbb R^d$ is 
\begin{equation}\label{clHardy}
\int_{\mathbb R^d} |\nabla u(x)|^2\, dx \ge \frac{(d-2)^2}{4}\, \int_{\mathbb R^d} \frac{|u(x)|^2}{|x|^2}\, dx, \quad u\in C_0^\infty(\mathbb R^d),
 \quad d\ge3, 
\end{equation}
is well known and has many elementary proofs. This inequality is not  achieved but the constant $(d-2)^2/4$ is sharp. 
It is often assiciated with the Heisenberg uncertainty principle and plays important role in spectral theory of Schr\"odinger 
operators. In particular, this inequality is equivalent to the quadratic form inequality
$$
-\Delta -  \frac{(d-2)^2}{4}\, \frac{1}{|x|^2} \ge 0,
$$
which states that if $d\ge3$, then one can subtract a positive operator from the Laplacian so that the difference remains 
non-negative.The literature devoted to different types of Hardy's inequalities is vast and it is not our aim to cover 
it in this short paper, but note that description of other ``Hardy weights" is an interesting problem.  
Here we are dealing with the case, where instead of the spherical symmetrical weight $1/|x|^2$ in the integral in the 
right hand side of \eqref{clHardy} we consider a more general class of homogeneous functions of degree $-2$ which may 
have singularities along  rays starting at the origin.

\medskip
\noindent
Namely, in this paper we prove the inequality 
\begin{equation}\label{newHardy1}
\int_{\mathbb R^d} |\nabla u(x)|^2\, dx \ge \tau \int_{\mathbb R^d} \frac{\Phi(x/|x|) }{|x|^2}\, |u(x)|^2\, dx,
 \quad u\in C_0^\infty(\mathbb R^d), \quad d\ge 3,
\end{equation}
with some $\tau>0$ for a class of measurable functions $\Phi$ defined  on $\mathbb S^{d-1}$.  The theorems proved in this 
paper are based on the recent inequalities obtained in the paper \cite{DEL}, where the authors have found sharp bounds 
for the first eigenvalue of a Schr\"odinger operator on $\mathbb S^{d-1}$ using deep results from \cite{BV}.  

\medskip
\noindent
In order to formulate our results let us introduce the measure $d\vartheta$ induced by Lebesgue's measure on 
$\mathbb S^{d-1}\subset \mathbb R^d$. We denote by $\|\cdot\|_{L^p(\mathbb S^{d-1})}$ the quantity 
$$
\|\Phi\|_{L^p(\mathbb S^{d-1})} = \left(\int_{\mathbb S^{d-1}} |\Phi(\vartheta)|^p\, d\vartheta\right)^{1/p}.
$$
Our first result is: 
\begin{theorem}\label{main} 
Let $d\ge3$ and $0\le \Phi \in L^{p}(\mathbb S^{d-1})$,  where
\begin{equation}\label{cond_p}
p\ge \frac{(d-2)^2}{2(d-1)} + 1.
\end{equation}
Then  
\begin{equation}\label{newHardy2}
\int_{\mathbb R^d} |\nabla u(x)|^2\, dx \ge \tau \int_{\mathbb R^d} \frac{\Phi(x/|x|) }{|x|^2}\, |u(x)|^2\, dx, \qquad u\in C_0^\infty(\mathbb R^d),
\end{equation}
where 
\begin{equation}\label{sharp_tau}
\tau =    \frac{(d-2)^2}{4}\, |\mathbb S^{d-1}|^{1/p}  \,   \| \Phi\|_{L^p(\mathbb S^{d-1})}^{-1}.
\end{equation}
\end{theorem}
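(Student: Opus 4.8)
The plan is to reduce the $d$-dimensional Hardy inequality to a one-dimensional radial problem coupled with a sharp eigenvalue inequality on the sphere, using separation of variables in polar coordinates.

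The plan is to reduce the $d$-dimensional inequality \eqref{newHardy2} to a one-dimensional radial Hardy inequality coupled to a sharp spectral estimate on $\mathbb{S}^{d-1}$, by separating variables in polar coordinates. Writing $x=r\vartheta$ with $r=|x|$ and $\vartheta\in\mathbb{S}^{d-1}$, I would use the polar decomposition of the Dirichlet energy,
\[
\int_{\mathbb{R}^d}|\nabla u|^2\,dx=\int_0^\infty\!\!\int_{\mathbb{S}^{d-1}}\Big(|\partial_r u|^2+\tfrac{1}{r^2}|\nabla_\vartheta u|^2\Big)r^{d-1}\,d\vartheta\,dr,
\]
where $\nabla_\vartheta$ is the gradient on $\mathbb{S}^{d-1}$, together with the analogous identity for the weighted term on the right of \eqref{newHardy2}. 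The difference of the two sides of \eqref{newHardy2} then splits into a purely radial part plus an $r^{-2}$-weighted angular quadratic form integrated against $r^{d-1}\,dr$.

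The crucial input is the sharp spherical inequality of \cite{DEL}, which under hypothesis \eqref{cond_p} I would cast in the equivalent Sobolev form
\[
\int_{\mathbb{S}^{d-1}}|\nabla_\vartheta f|^2\,d\vartheta+\frac{(d-2)^2}{4}\int_{\mathbb{S}^{d-1}}|f|^2\,d\vartheta\ \ge\ \frac{(d-2)^2}{4}\,|\mathbb{S}^{d-1}|^{1/p}\Big(\int_{\mathbb{S}^{d-1}}|f|^{2p'}\,d\vartheta\Big)^{1/p'},\qquad \tfrac1p+\tfrac1{p'}=1,
\]
for which the constant function is an extremizer. The role of \eqref{cond_p} is exactly to guarantee that this constant extremizer is admissible: a second-variation computation around constants, using that the first nonzero eigenvalue of $-\Delta_{\mathbb{S}^{d-1}}$ equals $d-1$, shows that the exponent $2p'$ is allowed precisely when \eqref{cond_p} holds, and \cite{DEL} (via \cite{BV}) upgrades this to a global statement. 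Combining the above with H\"older's inequality $\int_{\mathbb{S}^{d-1}}\Phi|f|^2\,d\vartheta\le\|\Phi\|_{L^p(\mathbb{S}^{d-1})}\big(\int_{\mathbb{S}^{d-1}}|f|^{2p'}\,d\vartheta\big)^{1/p'}$ and the choice \eqref{sharp_tau} of $\tau$, I would obtain the angular estimate
\[
\int_{\mathbb{S}^{d-1}}|\nabla_\vartheta f|^2\,d\vartheta-\tau\int_{\mathbb{S}^{d-1}}\Phi|f|^2\,d\vartheta\ \ge\ -\frac{(d-2)^2}{4}\int_{\mathbb{S}^{d-1}}|f|^2\,d\vartheta,
\]
i.e.\ that the lowest eigenvalue of $-\Delta_{\mathbb{S}^{d-1}}-\tau\Phi$ is at least $-(d-2)^2/4$.

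To finish, I would apply this angular estimate with $f=u(r\,\cdot)$ for each fixed $r>0$ and integrate in $r$; after Fubini, the difference of the two sides of \eqref{newHardy2} is bounded below by
\[
\int_{\mathbb{S}^{d-1}}\left[\int_0^\infty|\partial_r u|^2\,r^{d-1}\,dr-\frac{(d-2)^2}{4}\int_0^\infty\frac{|u|^2}{r^2}\,r^{d-1}\,dr\right]d\vartheta.
\]
For each fixed $\vartheta$ the inner bracket is nonnegative by the one-dimensional Hardy inequality $\int_0^\infty|g'|^2\,r^{d-1}\,dr\ge\frac{(d-2)^2}{4}\int_0^\infty|g|^2\,r^{d-3}\,dr$, valid for $d\ge3$, which yields \eqref{newHardy2}.

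The main obstacle is the sharp spherical inequality with its extremizing constant; this, however, is exactly the deep result quoted from \cite{DEL}. The remaining work on my side is the polar/Fubini bookkeeping, the verification that the H\"older step reproduces precisely the constant $\tau$ in \eqref{sharp_tau}, and checking that the behaviour at $r=0$ causes no difficulty, which is automatic since $d\ge3$ and $u\in C_0^\infty(\mathbb{R}^d)$.
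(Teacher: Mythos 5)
Your proposal is correct and follows essentially the same route as the paper: decompose the Dirichlet integral in polar coordinates, apply the one-dimensional radial Hardy inequality for each fixed $\vartheta$, and control the angular part by the sharp spectral estimate of \cite{DEL}, with the condition \eqref{cond_p} entering exactly to place $\mu=\tfrac{(d-2)^2}{4}$ in the linear regime $\alpha(\mu)=\mu$. The only (cosmetic) difference is that you invoke the \cite{DEL} result in its dual Gagliardo--Nirenberg form on $\mathbb S^{d-1}$ combined with H\"older, whereas the paper uses the equivalent eigenvalue formulation $|\lambda_1(-\Delta_\vartheta-\tau\Phi)|\le\alpha(\mu)$ directly (the equivalence being recorded in \eqref{mu-alpha}).
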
 

\begin{remark} 
For the class of functions $\Phi$ satisfying the conditions of the theorem, inequality \eqref{newHardy2} is sharp. Indeed, if $\Phi\equiv 1$, then \eqref{newHardy2} takes the classical sharp form
$$
\int_{\mathbb R^d} |\nabla u(x)|^2\, dx \ge  \frac{(d-2)^2}{4}\, \int_{\mathbb R^d} \frac{|u(x)|^2}{|x|^2}\, dx.
$$
\end{remark}

\begin{remark} 
If for example $d=3$, then the lowest possible value of $p$ that is allowed in Theorem \ref{main} equals $5/4$, see \eqref{cond_p}.
\end{remark}


\medskip
\noindent
Note that the condition on the value of $p$ in \eqref{cond_p} could be weakened. In our next theorem we consider the values of 
$p$ smaller than $ \frac{(d-2)^2}{2(d-1)} + 1$.


\begin{theorem}\label{Main2}
Let $d\ge3$ and $0\le \Phi\in L^{p}(\mathbb S^{d-1})$, where 
$$
p\in \left(1, \, 5/4\right),\,\,{\rm if}\,d=3, \quad{\rm and} \quad 
p\in \left[ \frac{d-1}{2}, \,\frac{(d-2)^2}{2(d-1)} + 1\right), \, \,  {\rm if} \, \,  d\ge4.
$$
Then  
\begin{equation}\label{newHardy4}
\int_{\mathbb R^d} |\nabla u|^2\, dx  \\
\ge (1-\nu_0) \, \frac{(d-2)^2}{4} \, \int_{\mathbb R^d} \frac{|u|^2}{|x|^2} \, dx + \tau\, \int_{\mathbb R^d} \frac{\Phi(x/|x|)}{|x|^2} \, |u|^2\, dx,
\end{equation}
where 
$$
\nu_0 =  \frac{2 (d-1)(p-1)}{(d-2)^2} <1.
$$
and
\begin{equation}\label{tau}
\tau =   \nu_0\, \frac{(d-2)^2}{4}\,  |\mathbb S^{d-1}|^{1/p}  \,   \| \Phi\|_{L^p(\mathbb S^{d-1})}^{-1}.
\end{equation}
\end{theorem}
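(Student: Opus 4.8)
The plan is to reduce the $d$-dimensional inequality to a sharp functional inequality on $\mathbb{S}^{d-1}$ by separating the radial and angular variables, and then to invoke the sphere estimate of \cite{DEL} in exactly the regime where it is governed by its threshold constant; this is the same mechanism as for Theorem \ref{main}, but allocating only a fraction $\nu_0$ of the Hardy constant to the weighted term. Writing $x=r\vartheta$ with $r=|x|$, $\vartheta\in\mathbb{S}^{d-1}$, and performing the Emden--Fowler change of variables $u(r,\vartheta)=r^{-(d-2)/2}v(t,\vartheta)$ with $t=\log r$, a direct computation (the cross term integrating to a vanishing boundary contribution since $v\to0$ as $t\to\pm\infty$ for $u\in C_0^\infty$) turns the Dirichlet energy into
$$
\int_{\mathbb{R}^d}|\nabla u|^2\,dx=\int_{-\infty}^{\infty}\!\!\int_{\mathbb{S}^{d-1}}\Bigl(|\partial_t v|^2+|\nabla_\vartheta v|^2\Bigr)\,d\vartheta\,dt+\frac{(d-2)^2}{4}\int_{-\infty}^{\infty}\!\!\int_{\mathbb{S}^{d-1}}|v|^2\,d\vartheta\,dt,
$$
while the two weighted integrals on the right of \eqref{newHardy4} become $\int\!\!\int|v|^2$ and $\int\!\!\int\Phi|v|^2$. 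Discarding the nonnegative term $|\partial_t v|^2$ and splitting $\frac{(d-2)^2}{4}=(1-\nu_0)\frac{(d-2)^2}{4}+\beta$ with $\beta:=\nu_0\frac{(d-2)^2}{4}$, the whole statement reduces to the pointwise-in-$t$ spherical inequality
$$
\int_{\mathbb{S}^{d-1}}|\nabla_\vartheta v|^2\,d\vartheta+\beta\int_{\mathbb{S}^{d-1}}|v|^2\,d\vartheta\ge\tau\int_{\mathbb{S}^{d-1}}\Phi\,|v|^2\,d\vartheta.
$$

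The key observation is that the prescribed $\nu_0$ makes $\beta$ equal to the threshold value, $\beta=\nu_0\frac{(d-2)^2}{4}=\frac{(d-1)(p-1)}{2}$. Setting $q=2p/(p-1)$, the stated range of $p$ is precisely the range $2<q\le\frac{2(d-1)}{d-3}$ (for $d\ge4$; $q\in(2,\infty)$ for $d=3$) in which the sharp interpolation inequality of \cite{DEL} on $\mathbb{S}^{d-1}$ holds with constants as extremizers and reads
$$
\int_{\mathbb{S}^{d-1}}|\nabla_\vartheta v|^2\,d\vartheta+\beta\int_{\mathbb{S}^{d-1}}|v|^2\,d\vartheta\ge\beta\,|\mathbb{S}^{d-1}|^{1/p}\left(\int_{\mathbb{S}^{d-1}}|v|^{2p/(p-1)}\,d\vartheta\right)^{(p-1)/p}.
$$
Here the lower bound $p\ge(d-1)/2$ is exactly $q\le 2(d-1)/(d-3)$, i.e.\ that $q$ stays below the critical Sobolev exponent of $\mathbb{S}^{d-1}$, which is where the estimate is available, while $p<\frac{(d-2)^2}{2(d-1)}+1$ is exactly $\nu_0<1$, so that a genuinely positive share of the Hardy constant survives as the first term of \eqref{newHardy4}.

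To finish, I would apply H\"older's inequality with exponents $p$ and $p'=p/(p-1)$ to the angular mass term,
$$
\int_{\mathbb{S}^{d-1}}\Phi\,|v|^2\,d\vartheta\le\|\Phi\|_{L^p(\mathbb{S}^{d-1})}\left(\int_{\mathbb{S}^{d-1}}|v|^{2p/(p-1)}\,d\vartheta\right)^{(p-1)/p},
$$
so that, on inserting $\tau=\beta\,|\mathbb{S}^{d-1}|^{1/p}\,\|\Phi\|_{L^p(\mathbb{S}^{d-1})}^{-1}$ from \eqref{tau}, the right-hand side of the \cite{DEL} inequality dominates $\tau\int\Phi|v|^2$; this is the required spherical inequality, and integrating in $t$ and undoing the change of variables returns \eqref{newHardy4}. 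The one genuinely delicate point is the matching of constants: one must verify that $\nu_0$ is calibrated so that $\beta$ lands exactly on the threshold $\frac{(d-1)(p-1)}{2}$ at which the \cite{DEL} estimate holds with the clean constant $\beta\,|\mathbb{S}^{d-1}|^{1/p}$ (equivalently, where constants are extremal). Away from this calibration the interpolation inequality either fails or loses its explicit sharp constant, and it is precisely this balancing that dictates the exact form of $\nu_0$ and $\tau$.
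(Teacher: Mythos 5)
Your proposal is correct and is essentially the paper's own argument: both separate radial and angular variables (your Emden--Fowler substitution is equivalent to the paper's use of the one-dimensional Hardy inequality in Lemma \ref{lemma}), and both hinge on calibrating $\nu_0$ so that $\beta=\nu_0\,\tfrac{(d-2)^2}{4}=\tfrac{(d-1)(p-1)}{2}$ lands exactly at the threshold of the linear regime of the \cite{DEL} sphere estimate, which then dictates $\tau$. The only cosmetic difference is that you invoke the interpolation form of \cite{DEL} (the identity \eqref{mu-alpha} with $\mu(\beta)=\beta$) and perform the H\"older step yourself, whereas the paper quotes the equivalent eigenvalue bound $|\lambda_1(-\Delta_\vartheta-\tau\Phi)|\le\alpha(\mu)$ through the Corollary to Lemma \ref{lemma}.
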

\begin{remark}
The inequality \eqref{newHardy4} is sharp and achieved for the functions $\Phi \equiv const$. Moreover, if 
$p= \frac{(d-2)^2}{2(d-1)} + 1$, then $\nu_0 = 1$ in  \eqref{newHardy4} and this inequality coincides with  \eqref{newHardy2}. 
\end{remark}


\begin{remark}\label{remark2}
In Theorem  \ref{theorem2} (see section 4)  we consider the values of $p$ 
\begin{equation}\label{p}
 \frac{d-1}{2}  < p   < \frac{(d-2)^2}{2(d-1)} + 1
\end{equation}
and obtain an inequality similar to \eqref{newHardy4} with with different ranges of $\tau $ and $\nu$'s.  
It is interesting that in this case the optimal  class of functions $\Phi$ does not coincide with constants. 
It is more convenient for us to formulate and prove the respective result after the proof of Theorems \ref{main} and \ref{Main2}.
\end{remark}

\medskip
\noindent
Finally in the last section we obtain a Hardy inequality for fractional powers of the Laplacian. Namely, let us  define the quadratic form 
$$
\int_{\Bbb R^d} |\nabla^\varkappa u(x)|^2\, dx = (2\pi)^{-d}\, \int_{\Bbb R^d} |\xi|^{2\varkappa} |\hat u(\xi)|^2 \, d\xi,
$$
where $\hat u$ is the Fourier transform of $u$. 

 \begin{theorem}\label{FracLaplacians}
Let $0<\varkappa < d/2$ for $d=1,2$, and  $0<\varkappa\le 1$ for $d\ge3$. Assume that  $\Phi=\Phi(x/|x|)\ge 0$ is a measurable function defined on $\Bbb S^{d-1}$, such that $\Phi\in L^{d/2\varkappa} (\mathbb S^{d-1})$. 
Then 
\begin{equation}\label{P-Sz-hardy}
\int_{\mathbb R^d}|\nabla^\varkappa (x)|^2\ge \tau \int_{\mathbb
R^d}\frac{\Phi(x/|x|)}{|x|^2 }\, |u(x)|^2 \, dx,
\end{equation}
where 
\begin{equation}\label{tau-frac}
\tau = 2^{2\varkappa} \, \frac{\Gamma^2\left((d/2+\varkappa)/2\right)}{\Gamma^2\left((d/2-\varkappa)/2\right)}\, 
\left|\mathbb S^{d-1}\right|^{2\varkappa/d}\, \|\Phi\|_{L^{d/2\varkappa}(\mathbb S^{d-1})}^{-1}.
\end{equation}
\end{theorem}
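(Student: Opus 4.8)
The plan is to combine the sharp fractional Hardy (Herbst type) inequality with a H\"older reduction to the radial direction. First I would record that the constant in \eqref{tau-frac} is precisely
\[
\tau = C_{d,\varkappa}\,\big|\mathbb{S}^{d-1}\big|^{2\varkappa/d}\,\|\Phi\|_{L^{d/2\varkappa}(\mathbb{S}^{d-1})}^{-1}, \qquad C_{d,\varkappa}=2^{2\varkappa}\,\frac{\Gamma^2((d+2\varkappa)/4)}{\Gamma^2((d-2\varkappa)/4)},
\]
where I used $(d/2\pm\varkappa)/2=(d\pm2\varkappa)/4$. Here $C_{d,\varkappa}$ is exactly the sharp constant in the unweighted fractional Hardy inequality $\int_{\mathbb{R}^d}|\nabla^\varkappa u|^2\,dx\ge C_{d,\varkappa}\int_{\mathbb{R}^d}|u|^2|x|^{-2\varkappa}\,dx$, which I take as the starting point (for $\varkappa=1$ this is the classical constant $(d-2)^2/4$ and the statement is already contained in Theorem \ref{main} with $p=d/2$). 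For $\Phi\equiv1$ one has $\|\Phi\|_{L^{d/2\varkappa}}=|\mathbb{S}^{d-1}|^{2\varkappa/d}$, so the asserted inequality reduces to fractional Hardy; this fixes the target constant and shows \eqref{P-Sz-hardy} is sharp, attained in the limit by the radial Hardy extremisers.

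Next I would separate angular and radial variables. Writing $x=r\omega$ and applying H\"older's inequality on $\mathbb{S}^{d-1}$ with the dual exponents $p=d/2\varkappa$ and $p'=d/(d-2\varkappa)$, and observing that $2p'=2^\ast:=2d/(d-2\varkappa)$ is the critical Sobolev exponent, I obtain
\[
\int_{\mathbb{R}^d}\frac{\Phi(x/|x|)}{|x|^{2\varkappa}}\,|u|^2\,dx\ \le\ \|\Phi\|_{L^{d/2\varkappa}(\mathbb{S}^{d-1})}\int_0^\infty r^{d-1-2\varkappa}\Big(\int_{\mathbb{S}^{d-1}}|u(r\omega)|^{2^\ast}\,d\omega\Big)^{2/2^\ast}dr .
\]
Introducing the radial profile $\bar u(x):=\|u(|x|\,\cdot)\|_{L^{2^\ast}(\mathbb{S}^{d-1})}$ and using $2/2^\ast=1-2\varkappa/d$, the right-hand integral equals $|\mathbb{S}^{d-1}|^{-1}\int_{\mathbb{R}^d}\bar u^2|x|^{-2\varkappa}\,dx$. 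Applying the fractional Hardy inequality to the radial function $\bar u$ then bounds this by $(C_{d,\varkappa}|\mathbb{S}^{d-1}|)^{-1}\|\nabla^\varkappa\bar u\|_{L^2}^2$. Thus everything reduces to the single estimate
\[
\|\nabla^\varkappa\bar u\|_{L^2(\mathbb{R}^d)}^2\ \le\ \big|\mathbb{S}^{d-1}\big|^{\,1-2\varkappa/d}\,\|\nabla^\varkappa u\|_{L^2(\mathbb{R}^d)}^2 ,
\]
which, combined with the two previous steps, yields \eqref{P-Sz-hardy} with exactly the constant \eqref{tau-frac}. For radial $u\ge0$ one has $\bar u=|\mathbb{S}^{d-1}|^{1/2^\ast}u$ and the estimate is an identity, consistent with sharpness for $\Phi\equiv\mathrm{const}$.

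The main obstacle is this last contraction inequality for the nonlocal energy: the map $u\mapsto\bar u$ takes the angular $L^{2^\ast}$-norm, which is \emph{not} a measure-preserving rearrangement, so the usual P\'olya--Szeg\H{o} theory does not apply directly. For $0<\varkappa<1$ I would argue from the Gagliardo representation $\|\nabla^\varkappa u\|^2=\tfrac{c_{d,\varkappa}}{2}\iint|u(x)-u(y)|^2|x-y|^{-d-2\varkappa}\,dx\,dy$, using the reverse Minkowski inequality $\big|\,\|u(r\cdot)\|_{2^\ast}-\|u(s\cdot)\|_{2^\ast}\big|\le\|u(r\cdot)-u(s\cdot)\|_{2^\ast}$ to replace $\bar u$ by $u$ inside the double integral; the difficulty is that the kernel $|x-y|^{-d-2\varkappa}$ couples the two angular directions, so the pointwise bound must be integrated against the angularly averaged kernel and compared, radius by radius, with the genuinely two-variable angular energy of $u$, with the $L^{2^\ast}$-versus-$L^2$ discrepancy absorbed by the critical Sobolev embedding on $\mathbb{S}^{d-1}$. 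Equivalently, and more in the spirit of the present paper, I would pass to the cylinder $\mathbb{S}^{d-1}\times\mathbb{R}$ via the Emden--Fowler substitution $u=|x|^{-(d-2\varkappa)/2}v$, $t=\log|x|$, Fourier transform in the translation variable $t$, and reduce the claim to a sharp spectral inequality at zero frequency for a nonlocal operator on $\mathbb{S}^{d-1}$ whose lowest eigenvalue is $C_{d,\varkappa}$ (with constant eigenfunction) --- the fractional counterpart of the reduction to the sphere Schr\"odinger bounds of \cite{DEL} used in Theorems \ref{main} and \ref{Main2}.
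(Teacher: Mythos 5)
Your reduction breaks at the last step, and not merely because the proof of the contraction inequality is missing: the inequality $\|\nabla^\varkappa\bar u\|_{L^2}^2\le|\mathbb S^{d-1}|^{1-2\varkappa/d}\|\nabla^\varkappa u\|_{L^2}^2$, with $\bar u(x)=\|u(|x|\,\cdot)\|_{L^{2^*}(\mathbb S^{d-1})}$ and $2^*=2d/(d-2\varkappa)$, is false. Take $u_\delta(x)=\delta^{-(d-2\varkappa)/2}\,U((x-x_0)/\delta)$, a bubble concentrated at a point $x_0$ with $|x_0|=1$. By scaling, $\|\nabla^\varkappa u_\delta\|_{L^2}^2$ is independent of $\delta$. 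On the other hand, since $(d-2\varkappa)2^*/2=d$ and the angular support of a slice of the bubble has measure $\sim\delta^{d-1}$, one gets $\int_{\mathbb S^{d-1}}|u_\delta(r\omega)|^{2^*}d\omega\sim\delta^{-1}$ for $|r-1|\lesssim\delta$, so $\bar u_\delta$ is a thin spherical shell of radial width $\sim\delta$ and height $h\sim\delta^{-(d-2\varkappa)/(2d)}$. Its energy is of order $h^2\delta^{1-2\varkappa}=\delta^{-2\varkappa(d-1)/d}\to\infty$ (for $\varkappa=1$ this is just $(h/\delta)^2\cdot\delta$). The failure is structural: angular averaging does not redistribute mass radially, so it can concentrate a spread-out function onto a thin shell; and while an $L^2$ angular average would contract the local energy by Cauchy--Schwarz applied to $\partial_r$, H\"older forces the exponent $2^*>2$ here, which inflates the profile of a concentrated function. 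Equality for radial $u$ is therefore misleading. Neither suggested repair addresses this: the reverse Minkowski step still compares an $L^{2^*}$-based quantity with the $L^2$-based energy, and the Emden--Fowler route would require a new sharp nonlocal spectral inequality on $\mathbb S^{d-1}$ that is not in \cite{DEL}.

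The paper's proof avoids the angular/radial splitting entirely and rearranges the \emph{whole} weight. The superlevel sets $\{\Phi(x/|x|)>t\,|x|^{2\varkappa}\}$ are star-shaped with volume $d^{-1}t^{-d/2\varkappa}\int_{\mathbb S^{d-1}}\Phi^{d/2\varkappa}d\theta$, and precisely because $p=d/2\varkappa$ matches the homogeneity, the symmetric decreasing rearrangement of $\Phi(x/|x|)\,|x|^{-2\varkappa}$ is \emph{exactly} $\|\Phi\|_{L^{d/2\varkappa}}\,|\mathbb S^{d-1}|^{-2\varkappa/d}\,|x|^{-2\varkappa}$. The Hardy--Littlewood inequality then replaces $(u,\Phi|x|^{-2\varkappa})$ by $(u^*,$ this radial weight$)$, the Herbst--Yafaev inequality is applied to $u^*$, and finally the fractional P\'olya--Szeg\H{o} inequality $\|\nabla^\varkappa u^*\|_2\le\|\nabla^\varkappa u\|_2$ controls the energy --- this is where the restriction $0<\varkappa\le1$ enters. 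Your opening observations (identification of the constant, sharpness for $\Phi\equiv\mathrm{const}$, the H\"older computation) are correct, but the proposed proof cannot be completed along the stated lines.
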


\bigskip
\noindent
In order to prove this theorem we use fractional Hardy inequalities proved in \cite{H} and \cite{Ya}Ê   \Big(note that 
$2^{2\varkappa} \, \Gamma^2\left((d/2+\varkappa)/2\right)\, \Gamma^{-2}\left((d/2-\varkappa)/2\right)\big|_{\varkappa=1} =(d-2)^2/4$ \Big).

\begin{remark}
Note, that in the case $\varkappa=1$ Theorem \ref{main} is stronger than Theorem \ref{FracLaplacians} since it allows us to have a larger class of functions  $\Phi$ because of the strict embedding 
$$
 L^{d/2}(\mathbb S^{d-1})  \subset L^{\frac{(d-2)^2}{2(d-1)} + 1}(\mathbb S^{d-1}).
$$
\end{remark} 

\begin{remark}
The constant $\tau$ in \eqref{tau-frac} is sharp as it is sharp for $\Phi = {\rm const}$.
\end{remark}

\noindent
In the recent paper of B. Devyver, M. Fraas and Y. Pinchover \cite{DFP} the authors considered a rather general second order operator with variable coefficients and found an optimal weight for the respective Hardy inequality. In particular, such a weight for the Laplacian coincides with  $1/|x|^2$.

\noindent
Our result is different as we deal with the ``flat" Laplacian and find a class of weight functions that may have singularities not only at the origin.

\medskip
\noindent
{\it Acknowledgements.}
The authors express their gratitude to Rupert Frank and Michael Loss for valuable discussions.

\section{Auxiliary statements}

\noindent 
In order to prove Theorem \ref{main} we use a result obtained in \cite {DEL} which provides a sharp estimate for the first 
negative eigenvalue $\lambda_1$ of the Schr\"odinger operator  in $L^2(\mathbb S^{d-1})$,
$$
-\Delta_\vartheta - \Phi, \qquad \Phi\ge 0,
$$
where $-\Delta_\vartheta$ is the Laplace-Beltrami operator on $\mathbb S^{d-1}$. Note that we need it only for the case $d\ge3$.

\begin{theorem}\label{AxTheorem} 
Let $d\ge3$ and  $0\le \Phi \in L^{p} (\mathbb S^{d-1})$,  where $p\in\big((d-1)/2,+\infty\big)$. Then there exists 
an increasing function $\alpha:\mathbb R_+\to\mathbb R_+$  
\begin{equation}\label{linear} 
\alpha(\mu)=\mu \quad {\rm for\,  any} \quad  \mu\in\left[0,\frac{d-1}{2}\,(p-1)\right],
\end{equation} 
and convex if  $\mu\in\big(\frac{d-1}{2}\,(p-1),+\infty\big)$, such that
\begin{equation}\label{T11}
|\lambda_1(-\Delta_\vartheta -\Phi)| \le \alpha\left(\frac{1}{|\mathbb S^{d-1}|^{1/p} } \|\Phi\|_{L^p(\mathbb S^{d-1})}\right).
\end{equation}
The estimate \eqref{T11} is optimal in the sense that there exists a non-negative function $\Phi$, such that 
\begin{equation*}
|\lambda_1(-\Delta_\vartheta -\Phi)| = \alpha\left(\frac{1}{|\mathbb S^{d-1}|^{1/p} } \|\Phi\|_{L^p(\mathbb S^{d-1})}\right).
\end{equation*}
for any $\mu\in\big(\frac{d-1}{2}\,(p-1),+\infty\big)$. If $\mu\le \frac{d-1}{2}\,(p-1)$, then equality in \eqref{T11}
is achieved for constants.

\noindent
For large values of $\mu$ we have
\begin{equation}\label{large-norms}
\alpha(\mu)^{p-\frac{d-1}{2}}= L^1_{p-\frac{d-1}{2}, d-1}  \, \mu^p\,(1+o(1)),
\end{equation}
where $L^1_{\gamma,d-1}$ are the Lieb-Thirring constants appearing in \cite{LTh} in the inequality for the lowest
 eigenvalue of a Schr\"odinger operator in $L^2(\Bbb R^{d-1})$. 

\noindent
Moreover, if $p=(d-1)/2$, $d\ge4$, then \eqref{T11} is satisfied with $\alpha(\mu) = \mu$ for $\mu \in [0,(d-1)(d-3)/2]$. 
\end{theorem}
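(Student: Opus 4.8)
The plan is to recast the optimal bound \eqref{T11} as a sharp subcritical Sobolev--type inequality on $\mathbb{S}^{d-1}$ and to analyse its three regimes. First I would pass to the dual formulation. Writing
\[
|\lambda_1(-\Delta_\vartheta-\Phi)|=\sup_{\|u\|_{L^2}=1}\int_{\mathbb{S}^{d-1}}\big(\Phi\,|u|^2-|\nabla_\vartheta u|^2\big)\,d\vartheta
\]
and maximizing over all $0\le\Phi$ with $\|\Phi\|_{L^p}=R$ held fixed, H\"older's inequality (with equality precisely when $\Phi\propto |u|^{2/(p-1)}$) gives
\[
\alpha(\mu)=\sup_{\|u\|_{L^2}=1}\Big(R\,\|u\|_{L^{q}(\mathbb{S}^{d-1})}^{2}-\int_{\mathbb{S}^{d-1}}|\nabla_\vartheta u|^2\,d\vartheta\Big),\qquad q=\frac{2p}{p-1},\quad R=\mu\,|\mathbb{S}^{d-1}|^{1/p}.
\]
The hypothesis $p>(d-1)/2$ is exactly the subcriticality $q<2^{*}=2(d-1)/(d-3)$, so this is the sharp Gagliardo--Nirenberg problem on the $(d-1)$-sphere. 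Since for each fixed $u$ the bracket is affine in $\mu$, the function $\alpha$ is automatically nondecreasing and convex; what remains is to locate its value and its maximizer.

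For the constant $u\equiv|\mathbb{S}^{d-1}|^{-1/2}$ the gradient term vanishes and, using the identity $1/p+2/q-1=0$, the bracket equals $\mu$; hence $\alpha(\mu)\ge\mu$ always, with equality exactly when the constant is the global maximizer. The crux is therefore a rigidity statement: one must prove that the extremal $u$ is constant for $\mu\le\frac{d-1}{2}(p-1)$. This is where \cite{BV} enters, applied to the Euler--Lagrange equation $-\Delta_\vartheta u+\lambda u=c\,u^{q-1}$, whose only positive solution in the subcritical range and for $\lambda$ below the threshold is the constant. The value of the threshold is forced by linearizing at the constant branch: one finds $-\Delta_\vartheta v=\lambda(q-2)v$, and since the smallest nonzero eigenvalue of $-\Delta_\vartheta$ is $d-1$ while $q-2=2/(p-1)$, symmetry breaking occurs exactly at $\lambda=\frac{(d-1)(p-1)}{2}$, which in the linear regime equals $\mu$; this reproduces \eqref{linear}.

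For the large-$\mu$ asymptotics \eqref{large-norms} I would use concentration. As $\mu\to\infty$ the maximizing $u$ concentrates at a point; rescaling in a geodesic chart flattens the metric, and the spherical functional converges to the Euclidean Gagliardo--Nirenberg functional on $\mathbb{R}^{d-1}$, whose dual sharp constant is the Lieb--Thirring constant $L^1_{p-(d-1)/2,\,d-1}$ of \cite{LTh}. Matching leading orders yields $\alpha(\mu)^{\,p-(d-1)/2}=L^1_{p-(d-1)/2,\,d-1}\,\mu^{p}\,(1+o(1))$, while the concentrating trial functions give asymptotic optimality; existence of genuine extremals at finite $\mu$ follows from the compactness of the subcritical embedding $H^1(\mathbb{S}^{d-1})\hookrightarrow L^q$. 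The borderline $p=(d-1)/2$ (so $q=2^{*}$) is critical and must be treated separately through the sharp conformal Sobolev inequality on the sphere, which pushes the rigidity threshold up to $(d-1)(d-3)/2$.

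I expect the decisive difficulty to be the rigidity of the second step: showing that the constant maximizer survives all the way up to $\mu=\frac{d-1}{2}(p-1)$, rather than merely past the linear bifurcation point, cannot be done by linearization alone and requires the full nonlinear machinery of \cite{BV} (a carr\'e-du-champ/Bakry--\'Emery flow, or the a priori estimates therein) to exclude non-constant positive solutions throughout the stable range.
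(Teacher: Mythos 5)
The first thing to say is that the paper does not actually prove Theorem \ref{AxTheorem}: it is imported from \cite{DEL} (which in turn rests on \cite{BV}), and the only proof-adjacent material in the paper is the dual characterization \eqref{mu-alpha} of the inverse function $\mu(\alpha)$, itself quoted without derivation. So there is no internal argument to measure you against; the relevant comparison is with the strategy of \cite{DEL}, and your sketch reconstructs its architecture faithfully. Your duality step --- saturating H\"older at $\Phi\propto|u|^{2/(p-1)}$ to replace the supremum over $\Phi$ with $\|\Phi\|_{L^p}$ fixed by the interpolation functional $\sup_u\,(R\|u\|_{L^q}^2-\|\nabla_\vartheta u\|_2^2)$ with $q=2p/(p-1)$ --- is exactly the mechanism behind \eqref{mu-alpha}; the observation that $\alpha$ is a supremum of functions affine in $\mu$ gives monotonicity and convexity for free; the exponent identity $1/p+2/q-1=0$ correctly yields $\alpha(\mu)\ge\mu$; and the linearization of $-\Delta_\vartheta u+\alpha u=\mu u^{q-1}$ at the constant branch, combined with the spectral gap $d-1$ of $-\Delta_\vartheta$ and $q-2=2/(p-1)$, correctly locates the bifurcation value $\tfrac{d-1}{2}(p-1)$. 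You are also right to identify the genuine difficulty: proving that constants remain global optimizers all the way up to the bifurcation point is a rigidity theorem for the Emden equation that linearization cannot deliver, and it is precisely the Bidaut-V\'eron--V\'eron result that \cite{DEL} invoke. The caveat is that your argument, like the paper's, is ultimately a reduction to cited results rather than a self-contained proof: the rigidity step, the concentration analysis behind \eqref{large-norms}, and the critical case $p=(d-1)/2$ are named but not carried out. As a blind reconstruction of a theorem the paper itself only cites, however, it is accurate and puts the weight on the right ingredients.
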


\medskip
\noindent
Note that here the function $\alpha(\mu)$ is invertible and its inverse $\mu(\alpha)$ equals (see \cite{DEL})
\begin{equation}\label{mu-alpha}
\mu(\alpha) = |\mathbb S^{d-1}|^{\frac{2}{q} -1}\, \inf_{u\in H^1(S^{d-1})}\, \frac{\|\nabla u\|^2_{L^2(\Bbb S^{d-1})} + \alpha \, 
\| u\|^2_{L^2(\Bbb S^{d-1})}}{\|u\|^2_{L^q(\Bbb S^{d-1})}},
\end{equation}
where $q\in\left(2, \frac{2(d-1)}{d-3}\right)$ (with  $(2,\infty)$ for $d=3$).
The optimal value in \eqref{mu-alpha} is achieved by the unique solution $u$ of the non-linear equation
$$
-\Delta u + \alpha \, u - \mu(\alpha)\, u^{q-1} = 0,
$$
that for each chosen $\alpha$ also defines the value of $\mu(\alpha)$.

\noindent
Obviously if  $v\equiv c$, $c\in\mathbb R$, and $\Phi\ge 0$ is non-trivial,  then  the quadratic form 
$$
\int_{\mathbb S^{d-1}} \left(|\nabla_\vartheta v|^2 - \Phi |v|^2\right)\, d\vartheta  = 
-c^2\, \int_{\mathbb S^{d-1}} \Phi \, d\vartheta <0.
$$
Therefore due to the variational principle the eigenvalue $\lambda_1(-\Delta_\vartheta - \Phi)$ is negative
 for any nonnegative, non-trivial $\Phi$ and consequently the inequality~\eqref{T11} is a lower estimate
\begin{equation} \label{alphamu}
0\ge\lambda_1(-\Delta_\vartheta - \Phi)\ge- \alpha\left(\frac{1}{|\mathbb S^{d-1}|^{1/p} } 
\|\Phi\|_{L^p(\mathbb S^{d-1})}\right) \quad \forall\, \Phi \in L^p(\mathbb S^{d-1}).
\end{equation} 
If $\Phi$ changes sign, the above inequality still holds if $\Phi$ is replaced by the positive part $\Phi_+$ 
of $\Phi$, provided the lowest eigenvalue is negative. We can then write
\begin{equation*}
|\lambda_1(-\Delta_\vartheta - \Phi )|\le      \alpha\left(\frac{1}{|\mathbb S^{d-1}|^{1/p} } \|\Phi_+
\|_{L^p(\mathbb S^{d-1})}\right).
\end{equation*} 

\medskip
\noindent
The expressions for the constants  $ L^1_{p-\frac{d-1}{2}, d}$ in \eqref{large-norms} are not explicit for 
$d\ge3$, but can be given in terms of an optimal constant in some Gagliardo-Nirenberg-Sobolev inequality 
(see~\cite{LTh} and \cite{DEL}) in the following way:

\noindent
Let $q= 2p/(p-1)>2$ and denote by $\mathsf K_{\rm GN}(q,d-1)$ the optimal constant in the Gagliardo-Nirenberg-Sobolev 
inequality, given by
\begin{equation*}
K_{\rm GN}(q,d-1):=\inf_{u\in H^1(\Bbb R^{d-1})\setminus\{0\}}\, \frac{\|\nabla u\|^{2\,\rho}_{L^2(\Bbb R^{d-1})} \,  \|u\|^{2\,(1-\,\rho)}_{L^2(\Bbb R^{d-1})}}{\|u\|^2_{L^q(\Bbb R^{d-1})}},
\end{equation*}
where  $\rho=\rho(q,d)=(d-1)\,\frac{q-2}{2\,q}$.

\noindent
Then  
\begin{equation*}
 L^1_{p-\frac{d-1}{2}, d-1}=\left[\rho^{-\rho}\,(1-\,\rho)^{-\,(1-\,\rho)}\,\mathsf K_{\rm GN}(q,d-1)\right]^{-p} \,.
\end{equation*}

\medskip
\noindent
\begin{lemma}\label{lemma}

Let $\tau>0$ and  $d\ge3$. Then  
\begin{multline}\label{eq-lem}
\int_{\mathbb R^d}|\nabla u|^2 dx \\
\ge \int_{\mathbb 
R^d}\frac{|u|^2}{|x|^2}\left(\tau\, \Phi(x/|x|) +\lambda_1\left(-\Delta_\vartheta - \tau \, \Phi(x/|x|)\right)
+\frac{(d-2)^2}{4}\right) \, dx.
\end{multline}
\end{lemma}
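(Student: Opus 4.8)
The plan is to separate variables in polar coordinates. Writing $x = r\vartheta$ with $r = |x|$ and $\vartheta \in \mathbb{S}^{d-1}$, recall that the gradient splits as
\begin{equation*}
|\nabla u|^2 = |\partial_r u|^2 + \frac{1}{r^2}\, |\nabla_\vartheta u|^2,
\end{equation*}
where $\nabla_\vartheta$ is the tangential gradient on the sphere, and the volume element is $dx = r^{d-1}\, dr\, d\vartheta$. Integrating the radial part first, I would like to extract the classical Hardy constant $(d-2)^2/4$ from the $|\partial_r u|^2$ term, so that the remaining budget $\tfrac{1}{r^2}|\nabla_\vartheta u|^2$ can be played against the potential term $\tau\, \Phi(\vartheta)\, r^{-2} |u|^2$ on each sphere separately.

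First I would treat the radial direction. For fixed $\vartheta$, substitute $u(r\vartheta) = r^{-(d-2)/2}\, w(r,\vartheta)$; a short computation (integration by parts in $r$, using that $u$ is compactly supported so boundary terms vanish) gives
\begin{equation*}
\int_0^\infty |\partial_r u|^2\, r^{d-1}\, dr = \int_0^\infty |\partial_r w|^2\, r\, dr + \frac{(d-2)^2}{4}\int_0^\infty |w|^2\, \frac{dr}{r},
\end{equation*}
and in the original variable the second term is exactly $\tfrac{(d-2)^2}{4}\int_0^\infty |u|^2\, r^{d-3}\, dr$. Discarding the manifestly nonnegative term $\int_0^\infty |\partial_r w|^2\, r\, dr \ge 0$, I obtain after integrating over $\vartheta$ the pointwise-in-$r$ lower bound
\begin{equation*}
\int_{\mathbb R^d} |\nabla u|^2\, dx \ge \int_{\mathbb S^{d-1}}\!\!\int_0^\infty \left( |\nabla_\vartheta u|^2 + \frac{(d-2)^2}{4}\, |u|^2 \right) r^{d-3}\, dr\, d\vartheta.
\end{equation*}

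Next I would handle the angular direction by the variational principle. For each fixed $r$, the function $\vartheta \mapsto u(r\vartheta)$ lies in $H^1(\mathbb S^{d-1})$, and by definition of the lowest eigenvalue of $-\Delta_\vartheta - \tau\,\Phi$,
\begin{equation*}
\int_{\mathbb S^{d-1}} |\nabla_\vartheta u|^2\, d\vartheta \ge \int_{\mathbb S^{d-1}} \tau\,\Phi(\vartheta)\, |u|^2\, d\vartheta + \lambda_1\!\left(-\Delta_\vartheta - \tau\,\Phi\right) \int_{\mathbb S^{d-1}} |u|^2\, d\vartheta.
\end{equation*}
Substituting this into the previous display and reassembling $r^{d-3}\, dr\, d\vartheta = r^{-2}\, dx$ yields exactly \eqref{eq-lem}. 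The only point requiring care — and the step I expect to be the main obstacle — is justifying the interchange and the application of the spherical variational principle fiberwise in $r$: one must check that the map $r \mapsto u(r\,\cdot\,)$ genuinely takes values in $H^1(\mathbb S^{d-1})$ for almost every $r$ (immediate from $u \in C_0^\infty$) and that the eigenvalue bound, being a pointwise-in-$r$ inequality with a constant $\lambda_1$ independent of $r$, integrates cleanly against the measure $r^{d-3}\, dr$. Everything else is the standard radial integration by parts and bookkeeping of the measure.
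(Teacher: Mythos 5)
Your proof is correct and follows essentially the same route as the paper: split $|\nabla u|^2$ in polar coordinates, extract $(d-2)^2/4$ from the radial term via the one-dimensional Hardy inequality applied fiberwise in $\vartheta$, and bound the angular term from below using the variational characterization of $\lambda_1(-\Delta_\vartheta-\tau\Phi)$ fiberwise in $r$. The only cosmetic difference is that you derive the radial Hardy inequality via the substitution $u=r^{-(d-2)/2}w$ (which, incidentally, cleanly handles the behaviour at $r=0$ for $d\ge3$), whereas the paper simply quotes it.
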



\begin{proof}
Let $x= (r, \vartheta) \in \mathbb R^d$ be polar coordinates in $ \mathbb R^d$. Then  we find
\begin{equation}\label{1}
\int_{\mathbb R^d} |\nabla u|^2\, dx = \int_0^\infty\int_{\mathbb S^{d-1}} \left(|\partial_r u|^2 + 
\frac{1}{r^2} \, |\nabla_\vartheta u|^2\right)\, r^{d-1}\, d\vartheta dr.
\end{equation} 
Note that according to the classical Hardy inequality for radial functions $f\in C_0^\infty(0,\infty)$ we have 
$$
\int_0^\infty |f'(r)|^2 \, r^{d-1}\, dr \ge  \frac{(d-2)^2}{4}\, \int_0^\infty \frac{|f|^2}{r^2}\, r^{d-1}\, dr.
$$
Applying the latter inequality to $u(r,\vartheta)$ for a fixed $\vartheta$ and then integrating over $\mathbb S^{d-1}$ we obtain
\begin{equation}\label{2}
\int_{\mathbb S^{d-1}}  \int_0^\infty  |\partial_r u|^2 \, r^{d-1}\, dr d\vartheta\\
\ge \frac{(d-2)^2}{4}\,  \int_{\mathbb S^{d-1}}  \int_0^\infty  \frac{|u|^2}{r^2} \, r^{d-1}\, dr d\vartheta.
\end{equation} 
Let $\tau>0$. It follows from Theorem \ref{AxTheorem}  that 
\begin{multline}\label{3}
\int_0^\infty\int_{\mathbb S^{d-1}} \frac{1}{r^2} \, |\nabla_\vartheta u|^2 \, r^{d-1}\, d\vartheta dr 
= \int_0^\infty\int_{\mathbb S^{d-1}} \frac{1}{r^2} \,  |\nabla_\vartheta u|^2 \, r^{d-1}\, d\vartheta dr 
\\
= 
\int_0^\infty\int_{\mathbb S^{d-1}} \frac{1}{r^2} \, \tau \,  \Phi \,  |u|^2 \, \, r^{d-1}\, d\vartheta dr  + 
\int_0^\infty\int_{\mathbb S^{d-1}} \frac{1}{r^2}  
\left( |\nabla_\vartheta u|^2  - \tau \,  \Phi \,  |u|^2\right)\, \, r^{d-1}\,
d\vartheta dr \\
\ge 
\int_0^\infty\int_{\mathbb S^{d-1}} \frac{1}{r^2} \, \left(\tau \,  \Phi + \lambda_1(-\Delta_\vartheta -\tau \, \Phi) \right)\,|u|^2 \, r^{d-1}\, d\vartheta dr.
\end{multline} 
Putting together  \eqref{1}, \eqref{2} and \eqref{3} we obtain the statement of the lemma.
\end{proof} 
\medskip


\begin{corollary}
Let $\tau>0$ and  $d\ge3$ and  let $0\le \Phi \in L^{p} (\mathbb S^{d-1})$, where 
$$
p\in\big(\max\{1,(d-1)/2\},+\infty\big).
$$
Then
\begin{equation}\label{cor}
\int_{\mathbb R^d}|\nabla u|^2 dx 
\ge \int_{\mathbb 
R^d}\frac{|u|^2}{|x|^2}\left(\tau\, \Phi(x/|x|) - \alpha(\mu)
+\frac{(d-2)^2}{4} \right) \, dx,
\end{equation}
where 
$$
\mu = \tau \, \, |\mathbb S^{d-1}|^{-1/p}  \,  \|\Phi\|_{L^p(\mathbb S^{d-1})}.
$$
\end{corollary}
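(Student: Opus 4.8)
The plan is to combine Lemma~\ref{lemma} with the eigenvalue estimate~\eqref{alphamu} furnished by Theorem~\ref{AxTheorem}. The starting point is inequality~\eqref{eq-lem}, which bounds $\int_{\mathbb R^d}|\nabla u|^2\,dx$ from below by the integral of $\frac{|u|^2}{|x|^2}$ against the weight
$$
\tau\,\Phi(x/|x|)+\lambda_1\bigl(-\Delta_\vartheta-\tau\,\Phi(x/|x|)\bigr)+\frac{(d-2)^2}{4}.
$$
The crucial observation is that $\lambda_1\bigl(-\Delta_\vartheta-\tau\,\Phi\bigr)$ is a single real number: it is the lowest eigenvalue of a fixed operator on $\mathbb S^{d-1}$ and therefore depends neither on the radial variable $r$ nor on the angular variable $\vartheta$. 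Hence it may be estimated once and for all and effectively pulled out of the integral.

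First I would apply the lower bound~\eqref{alphamu} with $\Phi$ replaced by $\tau\,\Phi$. Since $\tau>0$ and $\Phi\ge0$, the potential $\tau\,\Phi$ is again a non-negative element of $L^p(\mathbb S^{d-1})$, so Theorem~\ref{AxTheorem} applies under the stated hypothesis $p\in\bigl(\max\{1,(d-1)/2\},+\infty\bigr)$, which for $d=3$ reads $p>1$ and for $d\ge4$ reduces to $p>(d-1)/2$. Using homogeneity of the norm, $\|\tau\,\Phi\|_{L^p(\mathbb S^{d-1})}=\tau\,\|\Phi\|_{L^p(\mathbb S^{d-1})}$, the argument of $\alpha$ becomes exactly
$$
\frac{1}{|\mathbb S^{d-1}|^{1/p}}\,\|\tau\,\Phi\|_{L^p(\mathbb S^{d-1})}=\tau\,|\mathbb S^{d-1}|^{-1/p}\,\|\Phi\|_{L^p(\mathbb S^{d-1})}=\mu,
$$
and~\eqref{alphamu} yields $\lambda_1\bigl(-\Delta_\vartheta-\tau\,\Phi\bigr)\ge-\alpha(\mu)$.

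To conclude, I would substitute this bound into~\eqref{eq-lem}. Because the weight $\frac{|u|^2}{|x|^2}$ is non-negative almost everywhere, replacing the constant $\lambda_1\bigl(-\Delta_\vartheta-\tau\,\Phi\bigr)$ by the smaller constant $-\alpha(\mu)$ only decreases the right-hand side, which gives precisely~\eqref{cor}. I do not expect any genuine obstacle here: the entire content is the insertion of the sharp spherical eigenvalue bound into the radial--angular splitting already established in the lemma. The only points that require a moment's care are verifying that $\tau\,\Phi$ satisfies the hypotheses of Theorem~\ref{AxTheorem} (non-negativity and membership in $L^p$) and correctly tracking the scaling factor $\tau$ through the $L^p$-norm so that the argument of $\alpha$ matches the definition of $\mu$.
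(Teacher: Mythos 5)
Your proposal is correct and follows exactly the paper's own (one-line) argument: substitute the bound \eqref{alphamu}, applied to the potential $\tau\,\Phi$ so that its argument becomes $\mu$, into the conclusion \eqref{eq-lem} of Lemma \ref{lemma}. The only difference is that you spell out the homogeneity of the $L^p$-norm and the hypothesis check, which the paper leaves implicit.
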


\begin{proof}
Indeed, in  order to prove \eqref{cor} it is enough to apply the inequality \eqref{alphamu} estimating the value of  
$\lambda_1\left(-\Delta_\vartheta - \tau \, \Phi(x/|x|)\right)$ in \eqref{eq-lem}
\end{proof} 


\medskip
\section{Proofs of the main results}

\noindent
{\it Proof of Theorem \ref{main}}.

\medskip
\noindent
The condition 
\begin{equation}\label{restr_p}
p\ge \frac{(d-2)^2}{2(d-1)} + 1 
\end{equation}
implies both 
$$
p\in \left(\frac{d-1}{2}, \infty\right) \quad {\rm and} \quad \frac{(d-2)^2}{4} \le \frac{d-1}{2} (p-1).
$$
Due to Theorem \ref{AxTheorem} the convex function $\alpha(\mu) = \mu$ for  
$$
\mu\in \left[0, \frac{(d-1)(p-1)}{2}\right]. 
$$
Thus if in \eqref{cor} we choose $\tau$ according to the equation
$$
\alpha(\mu) = \mu =  |\mathbb S^{d-1}|^{-1/p}  \,  \tau\, \| \Phi\|_{L^p(\mathbb S^{d-1})} =  \frac{(d-2)^2}{4},
$$
namely 
$$
\tau =   \frac{(d-2)^2}{4}\, |\mathbb S^{d-1}|^{1/p}  \,   \| \Phi\|_{L^p(\mathbb S^{d-1})}^{-1},
$$
then we obtain the statement of Theorem \ref{main}.


\medskip
\noindent
{\it Proof of Theorem \ref{Main2}}.

\medskip
\noindent

\medskip
\noindent
When proving Theorem \ref{main} we fully compensated the positive term in the right hand side of \eqref{2}. This gave us
 a restriction on the possible values of $p$, see \eqref{restr_p}. Assume now that 
\begin{equation}\label{p-new}
p\in \left(1, \, 5/4\right),\,\,{\rm if}\,d=3, \quad{\rm and} \quad 
p\in \left[ \frac{d-1}{2}, \,\frac{(d-2)^2}{2(d-1)} + 1\right), \, \,  {\rm if} \, \,  d\ge4,
\end{equation}
and choose $\nu_0$ such that 
\begin{equation}\label{lambda}
\nu_0\, \frac{(d-2)^2}{4} = \frac{(d-1)(p-1)}{2},
\end{equation}
which gives us the value 
$$
\nu_0 =  \frac{2 (d-1)(p-1)}{(d-2)^2} <1.
$$
Then using \eqref{cor} we find

\begin{multline}
\int_{\mathbb R^d}|\nabla u|^2 dx 
\ge \int_{\mathbb 
R^d}\frac{|u|^2}{|x|^2}\left(\tau\, \Phi(x/|x|) - \alpha(\mu)
+\frac{(d-2)^2}{4}\right) \, dx \\
= 
\int_{\mathbb 
R^d}  \left(\tau\, \Phi(x/|x|)  + (1-\nu_0)\, \frac{(d-2)^2}{4}\right)  \,  \frac{|u|^2}{|x|^2} \, dx \\
+ 
\int_{\mathbb R^d} 
  \left(  \nu_0\, \frac{(d-2)^2}{4}  - \alpha(\mu) \right) \, \frac{|u|^2}{|x|^2} \, dx.
\end{multline}
Due to the choice of $p$ and $\nu_0$ given in  \eqref{p-new} and \eqref{lambda} respectively, we have 
$$
\alpha(\mu) = \mu =  |\mathbb S^{d-1}|^{-1/p}  \,  \tau\, \| \Phi\|_{L^p(\mathbb S^{d-1})}.
$$ 
It remains to choose $\tau$ according to 
$$
 \tau\, \, |\mathbb S^{d-1}|^{-1/p}  \,  \| \Phi\|_{L^p(\mathbb S^{d-1})} =  \nu_0\, \frac{(d-2)^2}{4},
$$
namely, 
$$
\tau =   \nu_0\,  \frac{(d-2)^2}{4}\, |\mathbb S^{d-1}|^{1/p}  \,   \| \Phi\|_{L^p(\mathbb S^{d-1})}^{-1}.
$$
This completes the proof of Theorem \ref{Main2}.


\medskip
\section{Hardy inequalities with $\nu_0<\nu\le1$. }

\medskip
\noindent
As it was mentioned in Remark \ref{remark2}, for the values 
$$
\frac{d-1}{2}  < p   < \frac{(d-2)^2}{2(d-1)} + 1.
$$
we can now consider $\nu: \, \nu_0<\nu\le1$. 
Then  since
$$
\frac{(d-2)^2}{4}  > \frac{(d-1)(p-1)}{2} 
$$
 the equation 
$$
\alpha \left(\tau\,  \, |\mathbb S^{d-1}|^{-1/p}  \,  \| \Phi\|_{L^p(\mathbb S^{d-1})} \right) = \nu \, \frac{(d-2)^2}{4}
$$
is more complicated, because in this case $\alpha(\mu)$ is non-linear. 
However, since it is increasing and convex, its inverse $\mu(\alpha)$ is well defined and thus we find 
$$
\tau = |S^{d-1}|^{1/p}  \, \| \Phi\|_{L^p(\mathbb S^{d-1})}^{-1} \, \mu\left(\nu\, \, \frac{(d-2)^2}{4}\right).
$$
Hence the inequality \eqref{cor} immediately implies:

\begin{theorem}\label{theorem2}
Let $d\ge3$ and $0\le \Phi\in L^{p}(\mathbb S^{d-1})$, where 
\begin{equation*}
\frac{d-1}{2} < p   < \frac{(d-2)^2}{2(d-1)} + 1.
\end{equation*}
Then  
\begin{equation}\label{newHardy3}
\int_{\mathbb R^d} |\nabla u|^2\, dx  \\
\ge (1-\nu) \, \frac{(d-2)^2}{4} \, \int_{\mathbb R^d} \frac{|u|^2}{|x|^2} \, dx + \tau\, \int_{\mathbb R^d} \frac{\Phi(x/|x|)}{|x|^2} \, |u|^2\, dx,
\end{equation}
where 
$$
\nu_0 =  \frac{2 (d-1)(p-1)}{(d-2)^2}  < \nu \le 1
$$ 
and
$$
\tau =   |S^{d-1}|^{1/p}  \,  \| \Phi\|_{L^p(\mathbb S^{d-1})}^{-1} \,\,  \mu\left(\nu\, \frac{(d-2)^2}{4}\right).
$$
\end{theorem}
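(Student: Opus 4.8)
The plan is to derive the statement as an immediate consequence of Corollary \eqref{cor}, the only genuinely new ingredient being the selection of $\tau$ in the regime where the envelope function $\alpha$ of Theorem \ref{AxTheorem} is nonlinear. First I would record the arithmetic consequence of the hypothesis $p<\frac{(d-2)^2}{2(d-1)}+1$, namely the inequality $\frac{(d-2)^2}{4}>\frac{(d-1)(p-1)}{2}$. Together with $\nu>\nu_0=\frac{2(d-1)(p-1)}{(d-2)^2}$ this shows that the target value $\nu\,\frac{(d-2)^2}{4}$ strictly exceeds the threshold $\frac{(d-1)(p-1)}{2}$ below which $\alpha(\mu)=\mu$. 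Hence we are placed in the range $\mu\in\big(\frac{d-1}{2}(p-1),+\infty\big)$, where Theorem \ref{AxTheorem} guarantees that $\alpha$ is strictly increasing and convex, and therefore invertible with inverse $\mu(\alpha)$ described by \eqref{mu-alpha}. I would also note in passing that the assumption $p>\frac{d-1}{2}$ is exactly what is needed for both Corollary \eqref{cor} and Theorem \ref{AxTheorem} to be applicable.

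Next I would invert. Since $\nu\,\frac{(d-2)^2}{4}$ lies in the range of $\alpha$, I set $\mu=\mu\big(\nu\,\frac{(d-2)^2}{4}\big)$ and, recalling from the Corollary that $\mu=\tau\,|\mathbb S^{d-1}|^{-1/p}\,\|\Phi\|_{L^p(\mathbb S^{d-1})}$, solve for
$$
\tau=|\mathbb S^{d-1}|^{1/p}\,\|\Phi\|_{L^p(\mathbb S^{d-1})}^{-1}\,\mu\Big(\nu\,\tfrac{(d-2)^2}{4}\Big),
$$
which is precisely the constant asserted in the theorem. By construction this choice forces $\alpha(\mu)=\nu\,\frac{(d-2)^2}{4}$.

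Finally I would substitute back into \eqref{cor}. With $\alpha(\mu)=\nu\,\frac{(d-2)^2}{4}$, the coefficient $\frac{(d-2)^2}{4}-\alpha(\mu)$ collapses to $(1-\nu)\,\frac{(d-2)^2}{4}$, and separating the two terms of the integrand yields exactly \eqref{newHardy3}. I expect no serious obstacle in the argument itself: the entire analytic difficulty has already been absorbed into Theorem \ref{AxTheorem}, which encodes the sharp eigenvalue estimate of \cite{DEL} together with the monotonicity, convexity and hence invertibility of $\alpha$. The one point that deserves explicit verification is that $\nu\,\frac{(d-2)^2}{4}$ genuinely falls on the invertible branch of $\alpha$ for every admissible $\nu\in(\nu_0,1]$; this is secured by the strict inequality $p>\frac{d-1}{2}$ and the bound $\nu>\nu_0$ established in the first step.
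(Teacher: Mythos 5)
Your proposal is correct and follows essentially the same route as the paper: apply Corollary \eqref{cor}, observe that $\nu\,\frac{(d-2)^2}{4}>\frac{(d-1)(p-1)}{2}$ places you on the convex (nonlinear) branch of $\alpha$, invert via $\mu(\alpha)$ to select $\tau$ so that $\alpha(\mu)=\nu\,\frac{(d-2)^2}{4}$, and substitute back. Your explicit verification that the target value lies on the invertible branch is a point the paper treats only implicitly, but otherwise the two arguments coincide.
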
 

.
\begin{remark}
Note that since $\mu(\alpha)$ is an increasing function, the value of $\tau$ in \eqref{newHardy3} is larger than the respecive value of $\tau$ in 
\eqref{newHardy4}.  In particular,  $\nu= 1$ allows us to consider a  class of weight functions $\Phi$ with full compensation of the term $(d-2)^2/4$. If follows from \cite{DEL} that the optimal functions $\Phi$ are not constants.
\end{remark}

\begin{remark}
The equation \eqref{large-norms} immediately implies 
$$
\mu(\alpha) = \left(L^1_{p-\frac{d-1}{2}, d-1}\right)^{-1/p} \, \alpha^{1-\frac{d-1}{2p}} \left(1+ o(1)\right) \quad 
{\rm as} \quad \alpha\to\infty,
$$
(see also Proposition 10 \cite{DEL}). 
\end{remark}


\medskip
\section{Proof of Theorem \ref{FracLaplacians}}

\medskip
\noindent
Let $A\subset\mathbb R^d$Ê  and denote by $A^*= \{x:\, |x|<r\}$ with $(|\mathbb S^{d-1}|/d ) |x|^d = |A|$ that is the symmetric rearrangement of $A$. By $\chi_A$ and $\chi_{A^*}$ we denote characteristic functions of $A$ and 
$A^*$ respectively. Then for any Borel measurable function $f:\, \mathbb R^d \to \mathbb C$  vanishing at infinity we denote by $f^*$ its decreasing rearrangement
$$
f^*(x) = \int_0^\infty \chi_{\{|f(x)| >t\}^*} \, dt.
$$
By using the Hardy-Littlewood rearrangement inequality we find 
\begin{equation*}
\int_{\mathbb R^d} \frac{\Phi(x/|x|)}{|x|^{2\varkappa}} \, |u|^2\, dx \le 
\int_{\mathbb R^d} \left(\frac{\Phi(x/|x|)}{|x|^{2\varkappa}}\right)^* \, (u^*)^2\, dx.
\end{equation*} 
Clearly
$$
\left|\{x: \, |\Phi(x/|x|)| >t\, |x|^{2\varkappa}\}\right| = \frac{1}{d}\, \,  t^{-d/2\varkappa}\,  \int_{\mathbb S^{d-1}}Ê\Phi^{d/2\varkappa} (\theta)\, d\theta
$$
and thus
\begin{multline*}
\left(\frac{\Phi(x/|x|)}{|x|^{2\varkappa}}\right)^* = \int_0^\infty \chi_{\{|\Phi(x/|x|)| >t\, |x|^{2\varkappa}\}^*} \, dt \\
= 
\int_0^\infty \chi_{\left\{|\mathbb S^{d-1}|\, |x|^d < \int_{\mathbb S^{d-1}} \Phi^{d/2\varkappa}(\theta)\, 
d\theta \, t^{-d/2\varkappa}\right\}}\, dt\\
= 
\frac{1}{\left|\mathbb S^{d-1}\right|^{2\varkappa/d}} \, \frac{\left(\int_{\mathbb S^{d-1}} \Phi^{d/2\varkappa}(\theta)\,  
d\theta\right)^{2\varkappa/d}}{|x|^{2\varkappa}}.
\end{multline*}Ê
We now use the Hardy inequality obtained in the papers \cite{H}, \cite{Ya} (see also \cite{FS}
for $L^p$-versions of these inequalities) stating that if $\varkappa < d/2$, then 
$$
\int_{\mathbb R^d} \frac{|u|^2}{|x|^{2\varkappa}} \, dx \le C_{\varkappa} \, \int_{\mathbb R^d} |\nabla^{\varkappa} u|^2\, dx,
$$
where
$$
C_\varkappa = 2^{-2\varkappa} \, \frac{\Gamma^2\left((d/2-\varkappa)/2\right)}{\Gamma^2\left((d/2+\varkappa)/2\right)}.
$$
Therefore 
\begin{multline*}\label{p=d/2kappa} 
\int_{\mathbb R^d} \frac{\Phi(x/|x|)}{|x|^{2\varkappa}} \, |u|^2\, dx \le 
\int_{\mathbb R^d} \left(\frac{\Phi(x/|x|)}{|x|^{2\varkappa}}\right)^* \, (u^*)^2\, dx\\
=
\frac{\|\Phi\|_{L^{d/2\varkappa}(\mathbb S^{d-1})}}{\left|\mathbb S^{d-1}\right|^{2\varkappa/d}}
\, \int_{\mathbb R^d} \frac{(u^*)^2}{|x|^{2\varkappa}} \, dx
\le C_\varkappa \, \frac{\|\Phi\|_{L^{d/2\varkappa}(\mathbb S^{d-1})}}{\left|\mathbb S^{d-1}\right|^{2\varkappa/d}} \,  
\int_{\mathbb R^d} |\nabla^\varkappa u^*(x)|^2\, dx. 
\end{multline*} 
Finally by using the P\'olya and Szeg\"o rearrangement inequality (see for example \cite{P}, \cite{LL}). 
$$
\|\nabla^{\varkappa} u^*\|_2Ê\le \|\nabla^{\varkappa}  u \|_2, \qquad 0 \le \varkappa \le 1,
$$
we complete the proof of Theorem \ref{FracLaplacians}.



\begin{thebibliography}{KVW09}

%
\bibitem[BV]{BV}
M.-F. Bidaut-Veron and L. Veron,  \textit{Nonlinear elliptic equations on compact Riemannian manifolds and asymptotics
 of Emden equations}, Invent. Math., \textbf{106} (1991),489--539.
%
\bibitem[DFP]{DFP} B. Devyver, M. Fraas and Y. Pinchover, \textit{Optimal Hardy weight for second-order elliptic operators:
 an answer to a problem of Agmon}, arXiv:1208. 2342v2[math.AP] 18 Apr 2013.
%
\bibitem[DEL]{DEL} J. Doulbeault, M.J. Esteban and A. Laptev, \textit{Spectral estimates on the Sphere},
accepted by Analysis $\&$ PDE
%
\bibitem[FS]{FS} R.L. Frank and R. Seiringer, \textit{Nonlinear groundstate representations and sharp Hardy inequalities},
J. Funct. Anal. \textbf{255}(2008), 3407-3430.
%
\bibitem[H]{H} I.W. Herbst, \textit{Spectral Theory of the operator $(p^2+m^2)^{1/2}-Ze^2/r$},
Commun. Math. Phys. \textbf{53} (1977), 285-294. 
%
\bibitem[LTh]{LTh} E.H. Lieb and W. Thirring, \textit{Inequalities for the moments of the eigenvalues
of the Schr\"{o}dinger Hamiltonian and their relation to Sobolev inequalities},
Studies in Math. Phys., Essays in Honor of Valentine Bargmann., Princeton, (1976), 269--303.
%
%
\bibitem[LL]{LL} E.H. Lieb and M. Loss \textit{Analysis}ÊSecond Edition, AMS, Graduate Studies in Mathematics, \textbf{14}  (2001), 346p.
%
\bibitem[P]{P} Y.J. Park, \textit{Fractional P\'olya-Szeg\"o inequality},
Journal of the Chungcheong Mathematical Society \textbf{24} (2011), 267-271. 
%
\bibitem[Ya]{Ya} D. Yafaev, \textit{Sharp constants in the Hardy Rellich inequalities}, J. Funct. Anal. \textbf{168} (1999), 121-144.
\end{thebibliography}
\end{document}